\documentclass[a4paper,11pt]{amsart}
\usepackage{amsmath, amsfonts, amssymb,amscd,graphics,graphicx,color,eucal} 
\usepackage{latexsym}\def\qed{\hfill$\Box$}   

\addtolength{\textwidth}{3pc}

\numberwithin{equation}{section}

\theoremstyle{plain}
\newtheorem{thm}{Theorem}[section]
\newtheorem{lem}[thm]{Lemma} 
\newtheorem{cor}[thm]{Corollary}
\newtheorem{prop} [thm] {Proposition} 

\theoremstyle{definition}
\newtheorem*{defn}{Definition}
\newtheorem*{rmk}{Remark}
\newtheorem*{ex}{Example}

\begin{document}
  
\title[Cohomology rings of $(n-k,k)$ Springer varieties]{The $S^1$-equivariant cohomology rings of $(n-k,k)$ Springer varieties}
\author{Tatsuya Horiguchi}
\address{Department of Mathematics, Osaka City University, Sumiyoshi-ku, Osaka 558-8585, Japan}
\email{d13saR0z06@ex.media.osaka-cu.ac.jp}
\date{\today}
\subjclass[2000]{Primary: 55N91, Secondary: 05A17} 
\maketitle

\begin{abstract}
The main result of this note gives an explicit presentation of the $S^1$-equivariant cohomology ring of the $(n-k,k)$ Springer variety (in type $A$) as a quotient of a polynomial ring by an ideal $I$, in the spirit of the well-known Borel presentation of the cohomology of the flag variety. 
\end{abstract}

\setcounter{tocdepth}{1}
\tableofcontents

\section{Introduction} \label{sect:1}

The Springer variety $\mathcal{S}_N$ associated to a nilpotent operator $N\colon \mathbb{C}^n\to \mathbb{C}^n$ is the subvariety of $Flags(\mathbb{C}^n)$ defined as 
\begin{center}
$\mathcal{S}_N=\{V_{\bullet} \in Flags(\mathbb{C}^n)\mid NV_i\subseteq V_{i-1} \ $for all$ \ 1\leq i\leq n \}$
\end{center}
where $V_{\bullet}$ denotes a nested sequence 
$$0=V_0 \subset  V_1 \subset  \dots \subset  V_{n-1} \subset  V_n=\mathbb{C}^n$$ 
of subspaces of $\mathbb{C}^n$ and $\dim_{\mathbb{C}}V_i=i$ for all $i$. When $N$ consists of two Jordan blocks of sizes $n-k$ and $k$ with $n\ge 2k$, we denote $\mathcal{S}_N$ by $\mathcal{S}_{(n-k,k)}$. 
The cohomology ring of Springer variety $\mathcal{S}_N$ has been much studied due to its relation to representations of the permutation group on $n$ letters (\cite{spr1}, \cite{spr2}).  In fact, the ordinary cohomology ring $H^*(\mathcal{S}_N;\mathbb{Q})$ is known to be the quotient of a polynomial ring by an ideal called Tanisaki's ideal (\cite{t}). In this paper we study the equivariant cohomology ring of $\mathcal{S}_{(n-k,k)}$ with respect to a certain circle action on $\mathcal{S}_N$ which we describe below.

Recall that the $n$-dimensional compact torus $T$ consisting of diagonal unitary matrices of size $n$ acts on $Flags(\mathbb{C}^n)$ in a natural way. A certain circle subgroup $S$ of $T$ leaves $\mathcal{S}_N$ invariant (cf. Section~\ref{sect:2}). The ring homomorphism 
$$H^{\ast}_{T}(Flags(\mathbb{C}^n);\mathbb{Q})\rightarrow H^{\ast}_{S}(\mathcal{S}_N;\mathbb{Q})$$
induced from the inclusions of $\mathcal{S}_N$ into $Flags(\mathbb{C}^n)$ and $S$ into $T$ is known to be surjective (cf. \cite{h-s}). The main result of this paper is an explicit presentation of $H^{\ast}_{S}(\mathcal{S}_{(n-k,k)};\mathbb{Q})$ as a ring using the epimorphism above (Theorem~\ref{theo:3.1}).   
In related work, Dewitt and Harada \cite{d-h} give a module basis of $H^*_S(\mathcal{S}_{(n-k,k)};\mathbb{Q})$ over $H^*(BS;\mathbb{Q})$ when $k=2$ from the viewpoint of Schubert calculus.  

Finally, since the restriction map 
\[
H^{\ast}_{S}(\mathcal{S}_{N};\mathbb{Q})\to H^{\ast}(\mathcal{S}_{N};\mathbb{Q})
\]
is also known to be surjective for any nilpotent operator $N$, our presentation of $H^{\ast}_{S}(\mathcal{S}_{(n-k,k)};\mathbb{Q})$ yields a presentation of  $H^{\ast}(\mathcal{S}_{(n-k,k)};\mathbb{Q})$ as a ring (Corollary \ref{coro:3.1}). However, the resulting presentation is slightly different from the one given in \cite{t}.

This paper is organized as follows. We briefly recall the necessary background in Section~\ref{sect:2}. Our main theorem, Theorem~\ref{theo:3.1}, is formulated in Section~\ref{sect:3} and proved in Section~\ref{sect:4}.

\medskip
\noindent
\textbf{Acknowledgements.}
The author thanks Professor Mikiya Masuda for valuable discussions and useful suggestions, and thanks Professor Megumi Harada for useful suggestions, and thanks Yukiko Fukukawa for valuable discussions.

\section{Nilpotent Springer varieties and $S^1$-fixed points} \label{sect:2}

We begin by recalling the definition of the nilpotent Springer varieties in type A. 
Since we work exclusively with type A in this paper, we henceforth omit it from our terminology.

The flag variety $Flags(\mathbb{C}^n)$ is the projective variety of nested subspaces in $\mathbb{C}^n$, i.e.
$$Flags(\mathbb{C}^n)=\{V_{\bullet}=(0 = V_0 \subset  V_1 \subset  \dots \subset  V_{n-1} \subset  V_n=\mathbb{C}^n) \mid \dim_{\mathbb{C}}V_i=i \}.$$

\begin{defn} 
Let $N\colon \mathbb{C}^n\to \mathbb{C}^n$ be a nilpotent operator. The \textbf{(nilpotent) Springer variety} $\mathcal{S}_N$ associated to $N$ is defined as 
\[
\text{$\mathcal{S}_N=\{V_{\bullet} \in Flags(\mathbb{C}^n)\mid NV_i\subseteq V_{i-1} \ $for all$ \ 1\leq i\leq n \}$.}
\]
\end{defn}

Since $\mathcal{S}_{gNg^{-1}}$ is homeomorphic (in fact, isomorphic as algebraic varieties) to $\mathcal{S}_N$ for any $g\in GL_n(\mathbb{C})$, we may assume that $N$ is in Jordan canonical form with Jordan blocks of weakly decreasing sizes. Let $\lambda _N$ denote the partition of $n$ with entries the sizes of the Jordan blocks of $N$.  
The $n$-dimensional torus $T$ consisting of diagonal unitary matrices of size $n$ acts on $Flags(\mathbb{C}^n)$ in a natural way and the circle subgroup $S$ of $T$ defined as 
\begin{equation} \label{eq:S}
S=\left\{ \begin{pmatrix}
  g  &    &    &     \\ 
    &  g^{2}  &    &         \\
    &    &  \ddots  &         \\
    &    &    &      g^n  
\end{pmatrix} \mid  \; g\in\mathbb{C}, |g|=1 \right\}
\end{equation}
leaves $\mathcal{S}_N\subseteq Flags(\mathbb{C}^n)$ invariant (see \cite{h-t}). 
The $T$-fixed point set $Flags(\mathbb{C}^n)^{T}$ of $Flags(\mathbb{C}^n)$ is given by 
$$\{(\langle e_{w(1)}\rangle \subset \langle e_{w(1)},e_{w(2)}\rangle \subset \dots \subset \langle e_{w(1)},e_{w(2)},\dots ,e_{w(n)}\rangle =\mathbb{C}^n) \mid w\in S_n\}$$
where $e_1,e_2,\dots ,e_n$ is the standard basis of $\mathbb{C}^n$ and $S_n$ is the permutation group on $n$ letters $\{1,2,\dots ,n \}$, 
so we identify $Flags(\mathbb{C}^n)^{T}$ with $S_n$ as is standard. Also, since the $S$-fixed point set $Flags(\mathbb{C}^n)^{S}$ of  $Flags(\mathbb{C}^n)$ agrees with $Flags(\mathbb{C}^n)^{T}$, we have 
$$\mathcal{S}_N^{S}=\mathcal{S}_N\cap Flags(\mathbb{C}^n)^{S}=\mathcal{S}_N\cap Flags(\mathbb{C}^n)^{T}\subset S_n.$$  

We denote by $\mathcal{S}_{(n-k,k)}$ the Springer variety corresponding to the partition $\lambda _N=(n-k,k)$  with $2k\leq n$.
We next describe the $S$-fixed points in $\mathcal{S}_{(n-k,k)}$. Let $w_{\ell_1,\ell_2,\dots,\ell_k}$ be an element of $S_n$ defined by
\begin{align} \label{eq:2.1}
w_{\ell_1,\ell_2,\dots,\ell_k}(i)=\begin{cases}
n-k+j       \ \ \ \ \ \  $if$ \ i=\ell_j,  \\
i-j \ \ \ \ \ \ \ \ \ \ \ \ $if$ \ \ell_{j}<i<\ell_{j+1}, 
\end{cases} 
\end{align}
where $\ell_0:=0$, $\ell_{k+1}:=n+1$.  Note that $w_{\ell_1,\ell_2,\dots,\ell_k}^{-1}(i)<w_{\ell_1,\ell_2,\dots,\ell_k}^{-1}(i')$ if $1\le i<i'\le n-k$ or $n-k+1\le i<i'\le n$.  

\begin{ex}
Take $n=4$ and $k=2$.  Using one-line notation, the set of permutations of the form described in \eqref{eq:2.1} are as follows:
$$[3,4,1,2],[3,1,4,2],[3,1,2,4],[1,3,4,2],[1,3,2,4],[1,2,3,4].$$
\end{ex}

\begin{lem}\label{lem2.1} 
The $S$-fixed points $\mathcal{S}_{(n-k,k)}^{S}$ of the Springer variety $\mathcal{S}_{(n-k,k)}$ is the set
$$\{w_{\ell_1,\ell_2,\dots,\ell_k}\in S_n \mid 1\leq \ell_1<\ell_2<\dots <\ell_k\leq n \}.$$
\end{lem}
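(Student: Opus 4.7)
The plan is to identify fixed flags by the permutation they correspond to under the standard identification $Flags(\mathbb{C}^n)^S = Flags(\mathbb{C}^n)^T \cong S_n$, and then translate the Springer condition $NV_i \subseteq V_{i-1}$ into a combinatorial condition on the permutation $w$. Concretely, a point of $Flags(\mathbb{C}^n)^T$ is a flag of the form $V_i^w = \langle e_{w(1)}, \ldots, e_{w(i)}\rangle$, so I need to determine exactly for which $w \in S_n$ all the inclusions $NV_i^w \subseteq V_{i-1}^w$ hold.

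First I would write down the action of $N$ on the standard basis. Since $N$ has Jordan blocks of sizes $n-k$ and $k$ (in Jordan canonical form), $Ne_1 = 0$, $Ne_{n-k+1} = 0$, and $Ne_i = e_{i-1}$ for all other $i$. Thus for the flag $V_\bullet^w$, the condition $NV_i^w \subseteq V_{i-1}^w$ becomes: for every $j \le i$ with $w(j) \notin \{1, n-k+1\}$, the index $w(j)-1$ must occur among $w(1),\ldots, w(i-1)$. Running over all $i$, this is equivalent to saying that $w^{-1}(m-1) < w^{-1}(m)$ for every $m \in \{2,\ldots,n\}\setminus\{n-k+1\}$. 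In other words, inside the sequence $w(1), w(2), \ldots, w(n)$, the values $1, 2, \ldots, n-k$ appear in that relative order, and similarly the values $n-k+1, n-k+2, \ldots, n$ appear in that relative order.

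Next I would match this combinatorial characterization with the explicit family \eqref{eq:2.1}. Given such a $w$, let $\ell_1 < \ell_2 < \cdots < \ell_k$ be the positions at which $w$ takes values in $\{n-k+1, \ldots, n\}$. The increasing condition on the second block forces $w(\ell_j) = n-k+j$, and the increasing condition on the first block forces $w$ to fill in the positions strictly between $\ell_j$ and $\ell_{j+1}$ with the next available values from $\{1,\ldots,n-k\}$, which are precisely the numbers $\ell_j-j+1, \ldots, \ell_{j+1}-j-1$. This is exactly the formula $w(i) = i-j$ for $\ell_j < i < \ell_{j+1}$ in \eqref{eq:2.1}. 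Thus $w = w_{\ell_1,\ldots,\ell_k}$, and conversely every permutation of that form clearly satisfies the two increasing conditions (as already noted in the remark following \eqref{eq:2.1}).

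The computation is essentially routine once the correct translation of the Springer condition is in place; the only subtle point is treating the two ``starts'' $m = 1$ and $m = n-k+1$ correctly, since these are the two indices on which $N$ vanishes and hence give no constraint. Keeping track of that exception is what makes the proof produce two separately increasing subsequences rather than a single one, and this is the step I would be most careful about.
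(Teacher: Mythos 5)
Your proof is correct and follows essentially the same route as the paper's: identify the $S$-fixed flags with permutations via $Flags(\mathbb{C}^n)^S=Flags(\mathbb{C}^n)^T$, compute $N$ on the standard basis, and translate $NV_i\subseteq V_{i-1}$ into the condition that the values $1,\dots,n-k$ and $n-k+1,\dots,n$ each appear in increasing relative order in $w(1),\dots,w(n)$. The paper phrases this as a step-by-step determination of $w(1),w(2),\dots$ rather than via the condition $w^{-1}(m-1)<w^{-1}(m)$ for $m\notin\{1,n-k+1\}$, but the two formulations are equivalent and the content is the same.
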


\begin{proof}
Since $\mathcal{S}_{(n-k,k)}^{S}\subset Flags(\mathbb{C}^n)^{T}$, any element $V_{\bullet}$ of $\mathcal{S}_{(n-k,k)}^S$ is of the form 
\begin{align*}
V_{\bullet}=(\langle e_{w(1)}\rangle \subset \langle e_{w(1)},e_{w(2)}\rangle \subset \dots \subset \langle e_{w(1)},e_{w(2)},\dots ,e_{w(n)}\rangle )
\end{align*}
for some $w\in S_n$. Since $N$ is the nilpotent operator consisting of two Jordan blocks with weakly decreasing sizes $(n-k,k)$,
$$Ne_{i}=\begin{cases}
0      \quad &\text{if $i=1 $ or $ n-k+1$},   \\
e_{i-1} \quad&\text{otherwise}. 
\end{cases} $$
Therefore, if $V_{\bullet}$ belongs to $\mathcal{S}_{(n-k,k)}$, then $w(1)=1$ or $n-k+1$. If $w(1)=1$ then $w(2)=2$ or $n-k+1$. 
If $w(1)=n-k+1$ then $w(2)=1$ or $n-k+2$, and so on.  
This shows that $w=w_{\ell_1,\ell_2,\dots,\ell_k}$ for some $1\leq \ell_1<\ell_2<\dots <\ell_k\leq n$. 
Conversely, one can easily see that $w_{\ell_1,\ell_2,\dots,\ell_k}\in \mathcal{S}_{(n-k,k)}^S$.
\end{proof}

\section{Main theorem} \label{sect:3}

In this section, we formulate our main theorem which gives an explicit presentation of the $S$-equivariant cohomology ring of the $(n-k,k)$ Springer variety.

First, we recall an explicit presentation of the $T$-equivariant cohomology ring of the flag variety. 
Let $E_i$ be the subbundle of the trivial vector bundle $Flags(\mathbb{C}^n)\times \mathbb{C}^n$ over $Flags(\mathbb{C}^n)$ whose fiber at a flag $V_{\bullet}$ is just $V_i$. We denote the $T$-equivariant first Chern class of the line bundle $E_i/E_{i-1}$ by $\bar x_i\in H^2_T(Flags(\mathbb{C}^n);\mathbb{Q})$. 
The torus $T$ consisting of diagonal unitary matrices of size $n$ has a natural product decomposition $T\cong (S^1)^n$ where $S^1$ is the unit circle of $\mathbb{C}$. This decomposition identifies $BT$ with $(BS^1)^n$ and induces an identification 
$$H^{\ast}_{T}(pt;\mathbb{Q})=H^{\ast}(BT;\mathbb{Q})\cong \bigotimes H^{\ast}(BS^1;\mathbb{Q})\cong \mathbb{Q}[t_1,\dots,t_n],$$
where $t_i \ (1\leq i\leq n)$ denotes the element corresponding to a fixed generator $t$ of $H^{2}(BS^1;\mathbb{Q})$.
Then $H^{\ast}_{T}(Flags(\mathbb{C}^n);\mathbb{Q})$ is generated by $\bar x_1,\dots ,\bar x_n,t_1,\dots ,t_n$ as a ring. We define a ring homomorphism $\pi $ from the polynomial ring $\mathbb{Q}[x_1,\dots ,x_n]$ to $H^{\ast}_{T}(Flags(\mathbb{C}^n);\mathbb{Q})$ by $\pi (x_i)=\bar x_i$. It is known that $\pi$ is an epimorphism and Ker\hspace{1.5pt}$\pi $ is generated as an ideal by $e_i(x_1,\dots ,x_n)-e_i(t_1,\dots ,t_n)$ for all $1\leq i\leq n$, where $e_i$ is the $i$th elementary symmetric polynomial. Thus, we have an isomorphism:
$$H^{\ast}_{T}(Flags(\mathbb{C}^n);\mathbb{Q})\cong \mathbb{Q}[x_1,\dots,x_n,t_1,\dots,t_n]/(e_i(x_1,\dots ,x_n)-e_i(t_1,\dots ,t_n),1\leq i\leq n).$$

We consider the following commutative diagram:
\begin{equation}\label{eq:3.1}
\begin{CD}
H^{\ast}_{T}(Flags(\mathbb{C}^n);\mathbb{Q})@>{\iota _1}>> H^{\ast}_{T}(Flags(\mathbb{C}^n)^{T};\mathbb{Q})=\displaystyle \bigoplus_{w\in S_n} \mathbb{Q}[t_1,\dots,t_n]\\
@V{\pi _1}VV @V{\pi _2}VV\\
H^{\ast}_{S}(\mathcal{S}_N;\mathbb{Q})@>{\iota _2}>> H^{\ast}_{S}(\mathcal{S}_N^{S};\mathbb{Q})=\displaystyle \bigoplus_{w\in \mathcal{S}_N^{S}\subset S_n} \mathbb{Q}[t]
\end{CD}
\end{equation}
where all the maps are induced from inclusion maps, and we have an identification
$$H^{\ast}_{S}(pt;\mathbb{Q})=H^{\ast}(BS;\mathbb{Q})\cong H^{\ast}(BS^1;\mathbb{Q})\cong \mathbb{Q}[t]$$
where we identify $S$ with $S^1$ through the map diag($g,g^{2},\dots, g^n$) $\mapsto g$.
The maps $\iota _1$ and $\iota _2$ in \eqref{eq:3.1} are injective since the odd degree cohomology groups of $Flags(\mathbb{C}^n)$ and $\mathcal{S}_N$ vanish. The map $\pi _1$ in \eqref{eq:3.1} is known to be surjective (cf. \cite{h-s}) and the map $\pi _2$ is obviously surjective. Since $\pi _1$ is surjective, we have the following lemma. Let $\tau _i$ be the image $\pi _1(\bar x_i)$ of $\bar x_i$ for each $i$.

\begin{lem}\label{lem3.0} 
The $S$-equivariant cohomology ring $H^{\ast}_{S}(\mathcal{S}_N;\mathbb{Q})$ is generated by $\tau _1,\dots, \tau _n, t$ as a ring where $\tau _i$ is the image of $\bar x_i$ under the map $\pi _1$ in \eqref{eq:3.1}. 
\end{lem}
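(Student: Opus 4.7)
The plan is a direct chase through the commutative diagram in \eqref{eq:3.1}, exploiting the already-stated fact that $\pi_1$ is surjective together with the explicit presentation of $H^{\ast}_{T}(Flags(\mathbb{C}^n);\mathbb{Q})$ recalled just above. Since surjections of rings send generating sets to generating sets, it suffices to identify $\pi_1$ on each of the generators $\bar x_1,\dots,\bar x_n,t_1,\dots,t_n$ of the source.

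First I would note that by definition $\pi_1(\bar x_i)=\tau_i$, so those generators already land in the desired subring. The only real content is to compute $\pi_1(t_i)$ for $1\le i\le n$. For this I would unwind the geometric origin of $\pi_1$: the $t_i$ live in $H^{\ast}_{T}(pt;\mathbb{Q})=H^{\ast}(BT;\mathbb{Q})$ and their image in $H^{\ast}_{S}(\mathcal{S}_N;\mathbb{Q})$ factors through the map on classifying spaces $BS\to BT$ induced by the inclusion $S\hookrightarrow T$ given in \eqref{eq:S}. Under the identification $S\cong S^1$ via $\operatorname{diag}(g,g^2,\dots,g^n)\mapsto g$, this inclusion sends $g$ to the diagonal character with weights $(1,2,\dots,n)$, so on second cohomology of $BS$ the pullback of $t_i$ is exactly $i\cdot t$.

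Putting the two computations together, $\pi_1(t_i)=i\,t$ and $\pi_1(\bar x_i)=\tau_i$, so the images of the ring generators of $H^{\ast}_{T}(Flags(\mathbb{C}^n);\mathbb{Q})$ all lie in the subring generated by $\tau_1,\dots,\tau_n,t$. Combined with the surjectivity of $\pi_1$ (quoted from \cite{h-s}), this forces $H^{\ast}_{S}(\mathcal{S}_N;\mathbb{Q})$ itself to be generated by $\tau_1,\dots,\tau_n,t$, which is the claim.

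There is no real obstacle here; the lemma is essentially a bookkeeping statement. The only subtlety worth being explicit about is the weight computation $t_i\mapsto i\,t$ arising from the specific embedding $S\hookrightarrow T$ in \eqref{eq:S}, since this is what guarantees that no additional $BS$-generators beyond the single $t$ are needed.
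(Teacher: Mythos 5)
Your proposal is correct and matches the paper's (essentially implicit) argument: the lemma is deduced directly from the surjectivity of $\pi_1$, the generation of $H^{\ast}_{T}(Flags(\mathbb{C}^n);\mathbb{Q})$ by $\bar x_1,\dots,\bar x_n,t_1,\dots,t_n$, and the weight computation $t_i\mapsto i\,t$ coming from the embedding \eqref{eq:S} (the paper records the same fact as $\pi_2(t_i)=it$). Nothing is missing.
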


We next consider relations between $\tau _1,\dots, \tau _n$, and $t$. We have  
$$\iota_2(\tau _i)|_w=w(i)t$$
because $\iota _1(\bar x_i)|_w=t_{w(i)}$, $\iota _1(t_i)|_w=t_i$, and $\pi _2(t_i)=it$, where $f|_w$ denotes the $w$-component of $f\in \displaystyle \bigoplus_{w\in S_n} \mathbb{Q}[t_1,\dots,t_n]$.

\begin{lem}\label{lem3.1} 
The elements $\tau _1,\dots ,\tau _n,t$ satisfy the following relations:
\begin{eqnarray}
&\displaystyle\sum_{1\leq i\leq n} \tau_i-\frac{n(n+1)}{2}t=0, & \label{eq:3.2}\\
&(\tau_i+\tau_{i-1}-(n-k+i)t)(\tau_i-\tau_{i-1}-t)=0 \qquad  (1\leq i\leq n), & \label{eq:3.3}\\
&\displaystyle\prod_{0\leq j\leq k}(\tau_{i_j}-(i_j-j)t))=0  \ \ \ \ \ \ \ \ \ \ \ \ (1\leq i_0<\dots <i_{k}\leq n) & \label{eq:3.4}
\end{eqnarray}
where $\tau _{0}=0$.
\end{lem}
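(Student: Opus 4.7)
The plan is to use the commutative diagram (3.1). Since the odd-degree cohomology of $\mathcal{S}_{(n-k,k)}$ vanishes, the map $\iota_2$ is injective, so each relation holds in $H^{\ast}_{S}(\mathcal{S}_{(n-k,k)};\mathbb{Q})$ if and only if its image under $\iota_2$ vanishes at every $S$-fixed point $w = w_{\ell_1,\ldots,\ell_k}$. The key computational input is $\iota_2(\tau_i)|_w = w(i)\,t$, derived from $\iota_1(\bar x_i)|_w = t_{w(i)}$ and $\pi_2(t_i) = it$. Together with the conventions $\tau_0 = 0$ and $w(0) = 0$, each relation becomes a numerical identity in the values $w(1),\ldots,w(n)$, which are completely determined by the formula (2.1).

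Relation (3.2) is the easiest: it is the image under $\pi_1$ of the flag-variety relation $e_1(\bar x_1,\ldots,\bar x_n) = e_1(t_1,\ldots,t_n)$, using $\pi_1(t_i) = it$. Equivalently, its fixed-point check reduces to $\sum_i w(i) = n(n+1)/2$, which is automatic since $w \in S_n$.

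For (3.3) I would fix $i \in \{1,\ldots,n\}$ and analyze by cases according to whether $i$ and $i-1$ belong to $\{\ell_1,\ldots,\ell_k\}$. When both of $i-1, i$ lie in one common open interval $(\ell_j,\ell_{j+1})$, or when $i-1 = \ell_{j-1}$ and $i = \ell_j$ for some $j$, formula (2.1) yields $w(i) - w(i-1) = 1$, killing the second factor. When exactly one of $i-1, i$ equals some $\ell_j$, a direct substitution gives $w(i-1) + w(i) = n-k+i$, killing the first factor. The boundary case $i = 1$ fits the same pattern via $\ell_0 = 0$ and $w(0) = 0$.

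For (3.4) the task reduces to exhibiting some $j \in \{0,\ldots,k\}$ with $\ell_j < i_j < \ell_{j+1}$, since then (2.1) gives $w(i_j) = i_j - j$ and the $j$th factor of the product vanishes at $w$. A short pigeonhole argument handles this: if no such $j$ existed, then using $\ell_0 = 0$ and $\ell_{k+1} = n+1$ we would have $i_0 \ge \ell_1$ and $i_k \le \ell_k$, so the smallest index $j^\ast$ with $i_{j^\ast} \le \ell_{j^\ast}$ must satisfy $j^\ast \ge 1$, forcing $i_{j^\ast-1} \ge \ell_{j^\ast}$; but then $\ell_{j^\ast} \le i_{j^\ast-1} < i_{j^\ast} \le \ell_{j^\ast}$, a contradiction. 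The only delicate point anywhere in the proof is coordinating the four cases of (3.3) with the correct vanishing factor, together with the tidy boundary handling via $\ell_0$, $\ell_{k+1}$, $w(0)$, and $\tau_0$; no ideas beyond the localization setup are required.
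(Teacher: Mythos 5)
Your proposal is correct and follows essentially the same route as the paper: relation \eqref{eq:3.2} is pulled back from the Borel relation $e_1(\bar x)=e_1(t)$, and \eqref{eq:3.3}, \eqref{eq:3.4} are verified by restricting to the fixed points $w_{\ell_1,\dots,\ell_k}$ via the injective map $\iota_2$ and the same four-case analysis on whether $i-1$ and $i$ lie in $\{\ell_1,\dots,\ell_k\}$. Your explicit pigeonhole argument for the existence of $j$ with $\ell_j<i_j<\ell_{j+1}$ supplies a detail the paper merely asserts, but it is not a different method.
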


\begin{proof}
The relation \eqref{eq:3.2} follows from a relation in $H^*_T(Flags(\mathbb{C}^n);\mathbb{Q})$.  In fact, 
$$\displaystyle\sum_{1\leq i\leq n} \tau_i-\frac{n(n+1)}{2}t=\pi _1((e_1(\bar x_1,\dots ,\bar x_n)-e_1(t_1,\dots ,t_n)))=0.$$

In the following, we denote $\iota _2(\tau _i)$ by the same notation $\tau _i$ for each $i$. To prove the relation \eqref{eq:3.3}, it is sufficient to prove  either 
\begin{equation} \label{eq:or}
\text{$(\tau_i+\tau_{i-1}-(n-k+i)t)|_{w_{\ell_1,\ell_2,\dots,\ell_k}}=0$ \ or \  $(\tau_i-\tau_{i-1}-t)|_{w_{\ell_1,\ell_2,\dots,\ell_k}}=0$}
\end{equation}
for any $w_{\ell_1,\ell_2,\dots,\ell_k}\in \mathcal{S}_{(n-k,k)}^{S}$ since the restriction map $\iota_2$ in \eqref{eq:3.1} is injective. 

We first treat the case $i=1$. By the definition of $w_{\ell_1,\ell_2,\dots,\ell_k}$ in \eqref{eq:2.1} the following holds: \\
$$\tau_1|_{w_{\ell_1,\ell_2,\dots,\ell_k}}=w_{\ell_1,\ell_2,\dots,\ell_k}(1)t=\begin{cases}
(n-k+1)t \ \ \ \ \ \ $if$ \ \ell_1=1,  \\
t       \ \ \ \ \ \ \ \ \ \ \ \ \ \ \ \ \ \ \ \ \  $if$ \ \ell_1\neq 1. 
\end{cases} $$
This shows \eqref{eq:or} for $i=1$ because $\tau _{0}=0$.

We now treat the case $1<i\leq n$. Note that
\begin{align}
(\tau_i-\tau_{i-1})|_{w_{\ell_1,\ell_2,\dots,\ell_k}}&=(w_{\ell_1,\ell_2,\dots,\ell_k}(i)-w_{\ell_1,\ell_2,\dots,\ell_k}(i-1))t, \label{eq:3.5} \\
(\tau_i+\tau_{i-1})|_{w_{\ell_1,\ell_2,\dots,\ell_k}}&=(w_{\ell_1,\ell_2,\dots,\ell_k}(i)+w_{\ell_1,\ell_2,\dots,\ell_k}(i-1))t. \label{eq:3.6}
\end{align}
We take four cases depending on whether $i-1$ and $i$ appear in $\ell_1,\dots,\ell_k$ or not.  

(i) If $\ell_{j}=i-1<i=\ell_{j+1}$ for some $1\le j\le k-1$, then by \eqref{eq:2.1} and \eqref{eq:3.5}, 
$$(\tau_i-\tau_{i-1})|_{w_{\ell_1,\ell_2,\dots,\ell_k}}=((n-k+j+1)-(n-k+j))t=t.$$

(ii) If $\ell_{j}<i-1<i<\ell_{j+1}$ for some $0\le j\le k$, then by \eqref{eq:2.1} and \eqref{eq:3.5}, 
$$(\tau_i-\tau_{i-1})|_{w_{\ell_1,\ell_2,\dots,\ell_k}}=((i-j)-(i-j-1))t=t.$$

(iii) If $\ell_j=i-1<i<\ell_{j+1}$ for some $1\le j\le k$, then by \eqref{eq:2.1} and \eqref{eq:3.6},  
$$(\tau_i+\tau_{i-1})|_{w_{\ell_1,\ell_2,\dots,\ell_k}}=((i-j)+(n-k+j))t=(n-k+i)t.$$

(iv) If $\ell_{j-1}<i-1<i=\ell_j$ for some $1\le j\le k$, then by \eqref{eq:2.1} and \eqref{eq:3.6},  
$$(\tau_i+\tau_{i-1})|_{w_{\ell_1,\ell_2,\dots,\ell_k}}=((n-k+j)+(i-j))t=(n-k+i)t.$$
Therefore, \eqref{eq:or} holds in all cases, proving the relations \eqref{eq:3.3}. 

Finally we prove the relations \eqref{eq:3.4}. For any $w_{\ell_1,\ell_2,\dots,\ell_k}\in \mathcal{S}_{(n-k,k)}^{S}$, there is a positive integer $i_j$ such that $\ell_{j}<i_j<\ell_{j+1}$ for some $0\le j\le k$.  Thus, we have 
$$w_{\ell_1,\ell_2,\dots,\ell_k}(i_j)=i_j-j.$$
This means that 
$$\displaystyle\prod_{0\leq j\leq k}(\tau_{i_j}-(i_j-j)t)|_{w_{\ell_1,\ell_2,\dots,\ell_k}}=0.$$
Therefore, the relations \eqref{eq:3.4} hold, and the proof is complete. 
\end{proof}

It follows from Lemma~\ref{lem3.1} that we obtain a well-defined ring homomorphism 
\begin{align} \label{eq:3.7}
\varphi :\mathbb{Q}[x_1,\dots, x_n,t]/I\rightarrow H^{\ast}_{S}(\mathcal{S}_{(n-k,k)};\mathbb{Q})
\end{align}
where $I$ is the ideal of a polynomial ring  $\mathbb{Q}[x_1,\dots, x_n,t]$ generated by the following three types of elements:
\begin{eqnarray}
&\displaystyle\sum_{1\leq i\leq n} x_i-\frac{n(n+1)}{2}t, & \label{eq:3.8} \\
&(x_i+x_{i-1}-(n-k+i)t)(x_i-x_{i-1}-t) \qquad  (1\leq i\leq n), & \label{eq:3.9}\\
&\displaystyle\prod_{0\leq j\leq k}(x_{i_j}-(i_j-j)t)  \ \ \ \ \ \ \ \ \ \ \ \ \ \ \ (1\leq i_0<\dots <i_{k}\leq n) & \label{eq:3.10}
\end{eqnarray}
where $x_{0}=0$. Moreover, $\varphi $ is surjective by Lemma~\ref{lem3.0}.

The following is our main theorem and will be proved in the next section.

\begin{thm} \label{theo:3.1}
Let $\mathcal{S}_{(n-k,k)}$ be the $(n-k,k)$ Springer variety with $0\leq k\leq n/2$ and let the circle group $S$ act on $\mathcal{S}_{(n-k,k)}$ as described in Section~\ref{sect:2}. Then the $S$-equivariant cohomology ring of $\mathcal{S}_{(n-k,k)}$ is given by 
$$H^{\ast}_{S}(\mathcal{S}_{(n-k,k)};\mathbb{Q})\cong \mathbb{Q}[x_1,\dots,x_n,t]/I$$
where $H^*_{S}(pt;\mathbb{Q})=\mathbb{Q}[t]$ and 
$I$ is the ideal of the polynomial ring  $\mathbb{Q}[x_1,\dots, x_n,t]$ generated by the elements listed in \eqref{eq:3.8}, \eqref{eq:3.9}, and \eqref{eq:3.10}.
\end{thm}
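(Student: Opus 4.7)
The plan is to prove the surjective ring homomorphism $\varphi\colon R:=\mathbb{Q}[x_1,\ldots,x_n,t]/I\to H^{\ast}_S(\mathcal{S}_{(n-k,k)};\mathbb{Q})$ of \eqref{eq:3.7} is also injective. Grade $R$ and the target by $\deg x_i=\deg t=2$. The vanishing of odd cohomology of $\mathcal{S}_{(n-k,k)}$ (used in Section~\ref{sect:3}) implies equivariant formality, so $H^{\ast}_S(\mathcal{S}_{(n-k,k)};\mathbb{Q})$ is free over $\mathbb{Q}[t]$ of rank $|\mathcal{S}_{(n-k,k)}^S|=\binom{n}{k}$ by Lemma~\ref{lem2.1}. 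Tensoring the short exact sequence $0\to\ker\varphi\to R\to H^{\ast}_S\to 0$ with $\mathbb{Q}[t]/(t)$, freeness of the target kills $\operatorname{Tor}^{\mathbb{Q}[t]}_1$ and yields an injection $\ker\varphi/t\ker\varphi\hookrightarrow R/tR$ whose image equals the kernel of the induced map $\bar\varphi\colon R/tR\to H^{\ast}(\mathcal{S}_{(n-k,k)};\mathbb{Q})$. So by graded Nakayama it suffices to show $\bar\varphi$ is an isomorphism, which, because $\bar\varphi$ is surjective onto a $\binom{n}{k}$-dimensional space, reduces to the bound $\dim_{\mathbb{Q}}R/tR\leq\binom{n}{k}$.

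Next I analyze $R/tR$ by setting $t=0$ in the generators \eqref{eq:3.8}--\eqref{eq:3.10} of $I$. Relation \eqref{eq:3.8} becomes $\sum_i x_i=0$; relation \eqref{eq:3.9} becomes $x_i^2-x_{i-1}^2=0$, which combined with $x_0=0$ forces $x_i^2=0$ for every $i$; and relation \eqref{eq:3.10} becomes $x_{i_0}x_{i_1}\cdots x_{i_k}=0$ for all $1\le i_0<\cdots<i_k\le n$. Consequently $R/tR\cong K/(X)$, where $X=x_1+\cdots+x_n$ and $K$ is the truncation in degrees $>k$ of the exterior algebra $\Lambda=\mathbb{Q}[x_1,\ldots,x_n]/(x_i^2)$; so $K$ has graded pieces $K_j=\Lambda_j$ of dimension $\binom{n}{j}$ for $0\le j\le k$ and $K_j=0$ for $j>k$.

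The main obstacle is to prove $\dim_{\mathbb{Q}} K/(X)\leq\binom{n}{k}$, a Lefschetz-type statement. The key input is that multiplication by the linear form $X$ is injective from $\Lambda_{j-1}$ to $\Lambda_j$ whenever $j\le n/2$; this is the classical hard Lefschetz theorem applied to $\Lambda\cong H^{\ast}((\mathbb{P}^1_{\mathbb{C}})^n;\mathbb{Q})$ with $X$ the first Chern class of the ample line bundle $\mathcal{O}(1,\ldots,1)$ (equivalently, the standard $\mathfrak{sl}_2$-action on the exterior algebra). Because $k\le n/2$ and $K_j=\Lambda_j$ for $j\le k$, the same injectivity descends to $X\cdot\colon K_{j-1}\to K_j$ for $1\le j\le k$, so the graded dimensions of $K/(X)$ are $\binom{n}{j}-\binom{n}{j-1}$ for $0\le j\le k$ and zero otherwise; these telescope to $\binom{n}{k}$, completing the argument.
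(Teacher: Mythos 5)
Your proof is correct, but it replaces the combinatorial core of the paper's argument with a Lefschetz-type input, so the two routes are genuinely different. The paper bounds the size of $\mathbb{Q}[x_1,\dots,x_n,t]/I$ directly over $\mathbb{Q}[t]$: Proposition~\ref{prop4.1} shows, by an explicit straightening induction on permissible fillings of two-row shapes, that the monomials $x_T$ indexed by standard tableaux on $(n-\ell,\ell)$ with $0\le\ell\le k$ generate the quotient as a $\mathbb{Q}[t]$-module, and Lemma~\ref{lem4.2} counts these via the hook length formula to get $\binom{n}{k}$; comparison with the $\mathbb{Q}[t]$-rank of $H^{\ast}_{S}(\mathcal{S}_{(n-k,k)};\mathbb{Q})$ then forces $\varphi$ to be an isomorphism. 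You instead pass to $t=0$ first --- your $\operatorname{Tor}$/graded-Nakayama reduction is the standard argument and is sound, and amounts to the same rank comparison the paper performs --- then identify $R/tR$ with the quotient of the degree-$\le k$ truncation of $\mathbb{Q}[x_1,\dots,x_n]/(x_1^2,\dots,x_n^2)\cong H^{\ast}((\mathbb{P}^1)^n;\mathbb{Q})$ by the class $x_1+\cdots+x_n$ (your specializations of \eqref{eq:3.8}--\eqref{eq:3.10} are right: $x_i^2-x_{i-1}^2$ with $x_0=0$ does yield all $x_i^2$), and invoke hard Lefschetz, equivalently the $\mathfrak{sl}_2$-action on the Boolean lattice, to get injectivity of multiplication by that class; since the source degrees involved satisfy $j-1\le k-1<n/2$, the Lefschetz statement applies and the dimensions telescope to $\binom{n}{k}$. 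What your route buys is brevity and a conceptual explanation of where $\binom{n}{k}$ comes from (in fact you get $\dim_{\mathbb{Q}}R/tR=\binom{n}{k}$ exactly); what it costs is the reliance on hard Lefschetz (a classical but nontrivial external input) and the loss of the explicit standard-tableau module basis of $H^{\ast}_{S}(\mathcal{S}_{(n-k,k)};\mathbb{Q})$ over $\mathbb{Q}[t]$, which the paper's Proposition~\ref{prop4.1} produces as a byproduct.
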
\  
 
Since the ordinary cohomology ring of $\mathcal{S}_{(n-k,k)}$ can be obtained by taking $t=0$ in Theorem~\ref{theo:3.1}, we obtain the following corollary. 

\begin{cor} \label{coro:3.1}
Let $\mathcal{S}_{(n-k,k)}$ be $(n-k,k)$ Springer variety with $0\leq k\leq n/2$. Then the ordinary cohomology ring of $\mathcal{S}_{(n-k,k)}$ is given by 
$$H^{\ast}(\mathcal{S}_{(n-k,k)};\mathbb{Q})\cong \mathbb{Q}[x_1,\dots,x_n]/J$$
where $J$ is the ideal of the polynomial ring $\mathbb{Q}[x_1,\dots,x_n]$ generated by the following three types of elements:
\begin{eqnarray*}
&\displaystyle\sum_{1\leq i\leq n} x_i , & \\
&x_i^2 & \qquad  (1\leq i\leq n),\\
&\displaystyle\prod_{1\leq j\leq k+1}x_{i_j}&  \qquad (1\leq i_1<\dots <i_{k+1}\leq n).
\end{eqnarray*}
\end{cor}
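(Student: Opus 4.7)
The plan is to deduce the corollary from Theorem~\ref{theo:3.1} by quotienting out $t$, and then to simplify the resulting generating set of the ideal. The starting point is the standard equivariant cohomology fact already invoked in the introduction: since $H^{\mathrm{odd}}(\mathcal{S}_{(n-k,k)};\mathbb{Q}) = 0$, the equivariant cohomology $H^*_S(\mathcal{S}_{(n-k,k)};\mathbb{Q})$ is a free module over $H^*(BS;\mathbb{Q}) = \mathbb{Q}[t]$, and the restriction map
\[
H^*_S(\mathcal{S}_{(n-k,k)};\mathbb{Q}) \twoheadrightarrow H^*(\mathcal{S}_{(n-k,k)};\mathbb{Q})
\]
is surjective with kernel the ideal $(t)$. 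Combining this with Theorem~\ref{theo:3.1}, I get the identification
\[
H^*(\mathcal{S}_{(n-k,k)};\mathbb{Q}) \cong \mathbb{Q}[x_1,\dots,x_n,t]/(I + (t)) \cong \mathbb{Q}[x_1,\dots,x_n]/\bar I,
\]
where $\bar I$ is the ideal obtained by setting $t=0$ in the generators of $I$ listed in \eqref{eq:3.8}--\eqref{eq:3.10}.

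Next, I would compute the images of those generators under $t \mapsto 0$. The generator \eqref{eq:3.8} becomes $\sum_{i=1}^n x_i$, matching the first generator of $J$. The generator \eqref{eq:3.9} becomes $(x_i + x_{i-1})(x_i - x_{i-1}) = x_i^2 - x_{i-1}^2$ for $1 \le i \le n$, with the convention $x_0 = 0$. The generator \eqref{eq:3.10} becomes $\prod_{j=0}^{k} x_{i_j}$ for $1 \le i_0 < \cdots < i_k \le n$, which after reindexing is exactly $\prod_{j=1}^{k+1} x_{i_j}$ for $1 \le i_1 < \cdots < i_{k+1} \le n$, matching the third generator of $J$.

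It remains to show that the ideal generated by $\{x_i^2 - x_{i-1}^2 : 1 \le i \le n\}$ (with $x_0 = 0$) is the same as the ideal generated by $\{x_i^2 : 1 \le i \le n\}$. This is an easy induction: the $i=1$ relation gives $x_1^2 = 0$; then the $i=2$ relation together with $x_1^2 = 0$ gives $x_2^2 = 0$; and so on up to $i=n$. Conversely, each $x_i^2 - x_{i-1}^2$ is a difference of two elements of $\{x_i^2\}$, so the two generating sets give the same ideal. Hence $\bar I = J$, finishing the proof.

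I do not anticipate any serious obstacle here, since the whole argument is a direct application of Theorem~\ref{theo:3.1}; the only non-automatic point is the elementary rewriting of the squared relations, and the only external input is the vanishing of odd-degree cohomology of $\mathcal{S}_{(n-k,k)}$ (used to guarantee that passing from the equivariant presentation to the ordinary presentation is simply quotienting by $t$).
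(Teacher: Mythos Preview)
Your proposal is correct and follows exactly the approach indicated in the paper, which simply states that the corollary is obtained by ``taking $t=0$ in Theorem~\ref{theo:3.1}'' without writing out the details. You have filled in precisely those details: the passage from equivariant to ordinary cohomology via the quotient by $t$, and the elementary rewriting of the specialized relations $x_i^2 - x_{i-1}^2$ (with $x_0=0$) as the relations $x_i^2$.
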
  

\begin{rmk} 
A ring presentation of the cohomology ring of the Springer variety $\mathcal{S}_N$ is given in \cite{t} for an arbitrary nilpotent operator $N$. Specifically, it is the quotient of a polynomial ring by an ideal called Tanisaki's ideal.  When $\lambda_N=(n-k,k)$, Tanisaki's ideal is generated by the following three types of elements:
\begin{eqnarray*}
&e_1(x_1,\dots ,x_n) , & \\
&e_2(x_{i_1},\dots ,x_{i_{n-1}})&  \qquad (1\leq i_1<\dots <i_{n-1}\leq n),\\
&e_{k+1}(x_{i_1},\dots ,x_{i_{k+1}})&  \qquad (1\leq i_1<\dots <i_{k+1}\leq n),
\end{eqnarray*}
where $e_i$ is the $i$th elementary symmetric polynomial. Note that the first and third elements above are the same as those in Corollary~\ref{coro:3.1}. In fact, one can easily check that Tanisaki's ideal above agrees with the ideal $J$ in Corollary~\ref{coro:3.1} although the generators are slightly different.
\end{rmk}

\section{Proof of the main theorem} \label{sect:4}

This section is devoted to the proof of Theorem~\ref{theo:3.1}. More precisely, we will prove that the epimorphism $\varphi $ in \eqref{eq:3.7} is an isomorphism.
For this, we first find generators of $\mathbb{Q}[x_1,\dots, x_n,t]/I$ as a $\mathbb{Q}[t]$-module.

Recall that a \textbf{filling} of $\lambda $ by the alphabet $\{1,\dots ,n \}$ is an injective placing of the integers $\{1,\dots ,n \}$ into the boxes of $\lambda $. 

\begin{defn} 
Let $\lambda $ be a Young diagram with $n$ boxes. A filling of $\lambda $ is a \textbf{permissible filling} if for every horizontal adjacency \begin{picture}(20,10)                                  
                      \put(0,-2){\framebox(10,10){$a$}}
                      \put(10,-2){\framebox(10,10){$b$}}                      
\end{picture} we have $a<b$.
Also, a permissible filling is a \textbf{standard tableau} if for every vertical adjacency \begin{picture}(10,20)                                  
                      \put(0,3){\framebox(10,10){$a$}}
                      \put(0,-7){\framebox(10,10){$b$}}                      
\end{picture} we have $a<b$.
\end{defn}

Let $T$ be a permissible filling of $(n-\ell,\ell)$ with $0\leq \ell\leq k$. Let $j_1,j_2,\dots ,j_\ell$ be the numbers in the bottom row of $T$. We define $x_{T}:=x_{j_1}x_{j_2}\dots x_{j_\ell}$ and $x_{T_0}:=1$ where $T_0$ is the standard tableau on $(n)$.

\begin{prop} \label{prop4.1} 
The set $\{x_{T} \mid T$ standard tableau on $ (n-\ell,\ell) $ with $ 0\leq \ell \leq k \} $ generates $\mathbb{Q}[x_1,\dots,x_n,t]/I$ as a $\mathbb{Q}[t]$-module.
\end{prop}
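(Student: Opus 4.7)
The plan is to prove the proposition by successive reductions that use the three families of generators of $I$ to straighten any polynomial modulo $I$ into a $\mathbb{Q}[t]$-linear combination of the designated $x_T$'s. First I would exploit the quadratic relation~\eqref{eq:3.9}, which rearranges to
$$
x_i^2 \equiv x_{i-1}^2 + (n-k+i+1)\,t\,x_i - (n-k+i-1)\,t\,x_{i-1} - (n-k+i)\,t^2 \pmod{I}.
$$
Inducting on $i$ with $x_0=0$ expresses each $x_i^2$ as a $\mathbb{Q}[t]$-linear combination of $1, x_1, \ldots, x_i$, so every monomial in the $x_j$'s is $\mathbb{Q}[t]$-congruent modulo $I$ to a combination of square-free monomials $x_S = \prod_{j \in S} x_j$. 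Next I would apply~\eqref{eq:3.10}: expanding $\prod_{j=0}^k(x_{i_j} - (i_j-j)t)$ produces the leading monomial $x_{i_0}\cdots x_{i_k}$ together with terms of strictly smaller total $x$-degree. Hence whenever $|S| \geq k+1$, isolating a $(k+1)$-subset shows that $x_S$ is congruent to a $\mathbb{Q}[t]$-combination of monomials of strictly smaller $x$-degree (re-squarized by the first step), and an induction on $|S|$ leaves only square-free $x_S$ with $|S| \leq k$.

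It remains to reduce each such $x_S$, $S=\{j_1<\cdots<j_\ell\}$, to standard-tableau monomials. Note that $x_S = x_T$ for a standard tableau on $(n-\ell,\ell)$ exactly when $j_m \geq 2m$ for all $m$. Suppose $m$ is the smallest index where this inequality fails; then either $j_1=1$ (if $m=1$) or $j_{m-1}=2m-2$ and $j_m=2m-1$ (if $m \geq 2$). I would induct on a lexicographic order on the tuple $(j_1,j_2,\ldots)$ in which a smaller first coordinate is worse. When $j_1=1$, substituting~\eqref{eq:3.8} as $x_1 = \tfrac{n(n+1)}{2}t - (x_2+\cdots+x_n)$ into $x_S$ yields a strictly lower-degree term, some $x_i^2$ factors reducible by the first step, and new square-free monomials of the same degree but with $\min \geq 2$, all of which strictly precede $x_S$ in the induction order. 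When $m \geq 2$, the plan is to combine~\eqref{eq:3.9} (possibly multiplied by auxiliary $x_j$ factors, and/or after substituting~\eqref{eq:3.8} into the $i=1$ instance $(x_1-(n-k+1)t)(x_1-t)=0$ to obtain a nontrivial quadratic identity among $x_2,\ldots,x_n$ that does carry cross terms $x_p x_q$) with a suitable instance of~\eqref{eq:3.10} indexed by a $(k+1)$-set containing $\{j_1,\ldots,j_m\}$, thereby producing a straightening identity whose leading term in the chosen order is $x_S$ and whose remaining terms all strictly precede $x_S$.

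The main obstacle is this last subcase, $m \geq 2$. Relation~\eqref{eq:3.9} has no cross-term $x_i x_{i-1}$ in its raw expansion, so it does not by itself reduce the offending factor $x_{2m-2}x_{2m-1}$ inside $x_S$; relation~\eqref{eq:3.8} only helps directly when $x_1$ appears as a factor; and~\eqref{eq:3.10} is an identity of degree $k+1$ in the $x$'s that may require further straightening after use. The delicate combinatorial work will be to identify the correct auxiliary multipliers and the correct $(k+1)$-index set for~\eqref{eq:3.10} so that the resulting polynomial identity has $x_S$ as its unique ``bad'' leading monomial and all other terms are either of strictly lower $x$-degree or else lie strictly earlier in the lexicographic order on tuples, which then closes the outer induction.
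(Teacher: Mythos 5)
Your first two reductions (eliminating squares via \eqref{eq:3.9}, which indeed has no cross term, and killing square-free monomials of degree $\ge k+1$ via \eqref{eq:3.10}) match the paper, and your criterion $j_m\ge 2m$ for $x_S$ to be a standard-tableau monomial is correct. But the heart of the proposition is precisely the step you leave open: straightening a square-free monomial $x_{b_1}\cdots x_{b_\ell}$ with $\ell\le k$ whose associated permissible filling is not standard. You acknowledge you do not have this step, and the tools you propose for it (cross terms manufactured from \eqref{eq:3.9}, auxiliary instances of \eqref{eq:3.10}) are not the ones that work; as it stands the proof has a genuine gap.

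The missing idea in the paper is a pigeonhole trick using only the linear relation \eqref{eq:3.8}, raised to a power. Let $U$ be the permissible filling with bottom row $b_1<\cdots<b_\ell$ and top row $a_1<\cdots<a_{n-\ell}$ the complement, and let $j$ be the first column with $a_j>b_j$. From \eqref{eq:3.8} one has $-x_{a_1}-\cdots-x_{a_{j-1}}\equiv x_{b_1}+\cdots+x_{b_\ell}+x_{a_j}+\cdots+x_{a_{n-\ell}}-\frac{n(n+1)}{2}t$; raise both sides to the $j$th power and multiply by $x_{b_{j+1}}\cdots x_{b_\ell}$. The left side is a degree-$j$ polynomial in only $j-1$ variables, so every monomial in its expansion contains a square, reduces in degree via \eqref{eq:3.11}, and is handled by the induction on $\ell$. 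The right side expands to $x_U$ with coefficient $1$ plus terms that are either of lower degree, contain a square, or are monomials $x_{U'}$ for permissible fillings $U'$ whose leftmost $j$ columns are all strictly increasing. Iterating in $j$ terminates at a standard tableau. So the correct induction is the outer one on $\ell$ combined with an inner improvement on the number of strictly increasing leftmost columns, rather than your lexicographic order on $(j_1,j_2,\dots)$; without some such identity your $m\ge 2$ subcase cannot be closed.
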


\begin{proof} It is sufficient to prove that $x_{b_1}x_{b_2}\cdots x_{b_\ell}$ \ ($1\leq {b_1}\leq {b_2}\leq \cdots \leq {b_\ell}\leq n$) can be written in $\mathbb{Q}[x_1,\dots, x_n,t]/I$ as a $\mathbb{Q}[t]$-linear combination of the $x_T$ where $T$ is a standard tableau. 
We prove this by induction on $\ell$. The base case $\ell=0$ is clear. Now we assume that $\ell\geq 1$ and the claim holds for $\ell-1$.
The relations \eqref{eq:3.9} imply that 
\begin{equation} \label{eq:3.11}
x_i^2=(n-k+i+1)tx_i+t\sum_{1\leq p\leq i-1} x_p-\sum_{1\leq p\leq i} (n-k+p)t^2 \ \ \ (1\leq i\leq n)
\end{equation}
by an inductive argument on $i$, so we may assume $b_1<b_2<\cdots <b_\ell$. 

To prove the claim for $\ell$, we consider two cases: $1\leq \ell\leq k$ and $\ell\geq k+1$.

(Case i). Suppose $1\leq \ell\leq k$. We write 
$x_{b_1}x_{b_2}\cdots x_{b_\ell}=x_U$ where 
\begin{center}
$U=$ \begin{picture}(115,15)     
                      \put(0,-7){\framebox(10,10){$b_1$}}                      
                      \put(0,3){\framebox(10,10){$a_1$}}
                      \put(10,-7){\framebox(20,10){$\dots$}}
                      \put(10,3){\framebox(20,10){$\dots$}}
                      \put(30,-7){\framebox(10,10){$b_{\ell}$}}
                      \put(30,3){\framebox(10,10){$a_{\ell}$}}
                      \put(40,3){\framebox(25,10){$a_{{\ell}+1}$}}
                      \put(65,3){\framebox(20,10){$\dots$}}
                      \put(85,3){\framebox(25,10){$a_{n-{\ell}}$}}

\end{picture} 
\end{center}
is a permissible filling of $(n-\ell,\ell)$. 
Let $j$ be the minimal positive integer in the set $\{r \mid a_r>b_r, 1\leq r\leq \ell \}$, i.e., 
\begin{align}
&a_i<b_i \ \ \ \ \ \ (1\leq i<j),    \label{eq:3.12} \\
&a_j>b_j.  \label{eq:3.13}
\end{align}
We consider the following equation which follows from the relation \eqref{eq:3.8}:
\begin{align} \label{eq:3.14}
&(-x_{a_1}-x_{a_2}-\dots-x_{a_{j-1}})^j\cdot x_{b_{j+1}}\cdots x_{b_{\ell}} \\ 
&=(x_{b_1}+x_{b_2}+\dots+x_{b_{\ell}}+x_{a_j}+x_{a_{j+1}}+\dots+x_{a_{n-\ell}}-\frac{n(n+1)}{2}t)^j\cdot x_{b_{j+1}}\cdots x_{b_{\ell}}.  \notag
\end{align}

\smallskip
\noindent
{\bf Claim 1.} The left hand side in \eqref{eq:3.14} is a $\mathbb{Q}[t]$-linear combination of the $x_T$ where the $T$ are standard tableaux. 

\smallskip
\noindent
Proof. We expand the left hand side in \eqref{eq:3.14}. Then any monomial which appears in the expansion is of the form 
\begin{equation*} 
x_{a_{{1}}}^{\alpha _1}\cdots x_{a_{{j-1}}}^{\alpha _{j-1}}x_{b_{j+1}}\cdots x_{b_\ell} 
\end{equation*}
where $\sum_{i=1}^{j-1}\alpha_i=j$ and $\alpha_i\ge 0$. 
Note that $\alpha _i>1$ for some $i$ since $\sum_{i=1}^{j-1}\alpha_i=j$ and $\alpha_i\ge 0$.  Therefore, using the relations \eqref{eq:3.11}, the monomial above turns into a sum of elements of the form 
$$f(t)\cdot x_{c_{1}}\cdots x_{c_h}$$ 
where $h<\ell$, $1\leq c_1<\dots <c_h\leq n$, and $f(t)\in \mathbb{Q}[t]$, and by the induction assumption the term above can be written as a $\mathbb{Q}[t]$-linear combination of the $x_T$ where $T$ is a standard tableau.  This proves Claim 1.  \qed

\smallskip
\noindent
{\bf Claim 2.} The right hand side in \eqref{eq:3.14} can be written as a $\mathbb{Q}[t]$-linear combination of $x_U$ and monomials $x_T$ and $x_{U^{\prime}}$ where the coefficient of $x_U$ is equal to $1$, $T$ is a standard tableau on shape $(n-\ell,\ell)$ and $U^{\prime}$ is a permissible filling of $(n-\ell,\ell)$ such that each of the leftmost $j$ columns are strictly increasing (i.e. $a_r<b_r, 1\leq r\leq j$). 

\smallskip
\noindent
Proof. We expand the right hand side in \eqref{eq:3.14}. A monomial which appears in this expansion is of the form 
$$x_{b_{p_1}}^{\beta _1}\cdots x_{b_{p_m}}^{\beta _m}x_{a_{q_{1}}}^{\alpha _1}\cdots x_{a_{q_{h}}}^{\alpha _{h}}x_{b_{j+1}}\cdots x_{b_\ell}$$ 
where $\sum_{i=1}^m\beta_i+\sum_{i=1}^h\alpha_i\leq j,\ \beta_i\ge 1,\ \alpha_i\ge 1$ and $1\leq p_1<\dots<p_m\leq \ell$, $j\leq q_1<\dots<q_{h}\leq n-\ell$.
It is enough to consider the case $\sum_{i=1}^m\beta_i+\sum_{i=1}^h\alpha_i=j$ since if $\sum_{i=1}^m\beta_i+\sum_{i=1}^h\alpha_i<j$ then it follows from the induction assumption that the above form can be written as a $\mathbb{Q}[t]$-linear combination of the $x_T$ where $T$ is a standard tableau. 
If $p_m\ge j+1$ or some $\beta _i$ or $\alpha _i$ is more than $1$, then it follows from the relations \eqref{eq:3.11} and the induction assumption that the monomial above can be written as a linear combination of $x_T$'s over $\mathbb{Q}[t]$ where $T$ is a standard tableau.  If $p_m\le j$ and all $\beta_i$ and $\alpha_i$ are equal to 1, then $h=j-m$ and the monomial above is of the form 
$$x_{b_{p_1}}\cdots x_{b_{p_m}}x_{a_{q_{1}}}\cdots x_{a_{q_{j-m}}}x_{b_{j+1}}\cdots x_{b_\ell}$$ 
where $1\leq p_1<\dots<p_m\leq j\leq q_1<\dots<q_{j-m}\leq n-\ell$.
This monomial is associated to a permissible filling $U^{\prime}$ given by
\begin{center}
$U^{\prime}=$ \begin{picture}(115,15)     
                      \put(0,-7){\framebox(10,10){$d_1$}}                      
                      \put(0,3){\framebox(10,10){$c_1$}}
                      \put(10,-7){\framebox(20,10){$\dots$}}
                      \put(10,3){\framebox(20,10){$\dots$}}
                      \put(30,-7){\framebox(10,10){$d_{\ell}$}}
                      \put(30,3){\framebox(10,10){$c_{\ell}$}}
                      \put(40,3){\framebox(25,10){$c_{{\ell}+1}$}}
                      \put(65,3){\framebox(20,10){$\dots$}}
                      \put(85,3){\framebox(25,10){$c_{n-{\ell}}$}}

\end{picture} 
\end{center}
where  
\begin{align*}
d_i=\begin{cases}
b_{p_i}       \ \ \ \ \ \ \ \ \ \ \ \ \ \ \ \ \ \ \ \ \ \ \ \ \ \ \ \ \ \ \ \ \ \ \ \ \ \ \ \ \ \ \ \ \ \ \ \ \ \ \ \ \ \ \ \ \ \ \ \ \ \ \ \ \ \ \ \ \ \ \ \ \ \ \ \ $if$ \ 1\leq i\leq m,  \\
\text{min} \{ \{{a_{q_{1}}},\cdots ,{{a_{q_{j-m}}},{b_{j+1}},\cdots ,{b_\ell} \}-\{d_{m+1}, \dots ,d_{i-1} \}} \} \ \ \ \ \ \ \ \ $if$ \ m<i\leq \ell, 
\end{cases} 
\end{align*}
and
$$c_i=\text{min} \{ \{{a_{1}},\cdots ,{a_{n-\ell}},{b_{1}},\cdots ,{b_j} \}-\{a_{q_1}, \cdots ,a_{q_{j-m}}, b_{p_1},\cdots ,b_{p_m}, c_{1},\cdots ,c_{i-1} \} \}$$
for $1\leq i\leq n-\ell$.
Note that $x_{U^{\prime}}=x_U$ if and only if $m=j$, since $m=j \Leftrightarrow d_i=b_i$ for $1\leq i\leq \ell$.
We consider the case $m<j$. Since $j\leq q_1$ and $a_j>b_j$ by \eqref{eq:3.13}, we have 
$$c_i=\text{min} \{ \{{a_{1}},\cdots ,{a_{j-1}},{b_{1}},\cdots ,{b_j} \}-\{b_{p_1},\cdots ,b_{p_m}, c_{1},\cdots ,c_{i-1} \} \}$$
for $1\leq i\leq j$.
If $1\leq i\leq m$, we have $c_i\leq a_i<b_i\leq b_{p_i}=d_i$.
If $m<i\leq j$, we have $c_i\leq \text{max} \{a_{j-1}, b_j \}<\text{min} \{a_{j}, b_{j+1} \}\leq d_i$ by \eqref{eq:3.12}, \eqref{eq:3.13}, and $j\leq q_1$.
Thus, $U^{\prime}$ is a permissible filling of $(n-\ell,\ell)$ such that each of the leftmost $j$ columns are strictly increasing (i.e. $a_r<b_r, 1\leq r\leq j$). This proves Claim 2.  \qed

\smallskip

Claims 1 and 2 show that $x_U$ can be written as a $\mathbb{Q}[t]$-linear combination of $x_{U^{\prime}}$ and $x_T$, where $U^{\prime}$ and $T$ are as above. Applying the above discussion for $x_{U^{\prime}}$ in place of $x_{U}$, we see that $x_{U^{\prime}}$ can be written as a $\mathbb{Q}[t]$-linear combination of $x_{U^{\prime \prime}}$ and $x_T$ where $U^{\prime \prime}$ is a permissible filling of $(n-\ell,\ell)$ such that each of the leftmost $j+1$ columns are strictly increasing (i.e. $a_r<b_r, 1\leq r\leq j+1$) and $T$ is a standard tableau.  Repeating this procedure, we can finally express $x_U$ as a $\mathbb{Q}[t]$-linear combination of the $x_T$ where $T$ is a standard tableau. 

(Case ii). If $\ell\geq k+1$, it follows from the relations \eqref{eq:3.10} and the induction assumption that $x_{b_1}x_{b_2}\cdots x_{b_\ell}$  can be 
expressed as a $\mathbb{Q}[t]$-linear combination of the $x_T$ where $T$ is a standard tableau.  

This completes the induction step and proves the proposition.  
\end{proof}

Recall that for a box $b$ in the $i$th row and $j$th column of a Young diagram $\lambda $, $h(i,j)$ denote the number of boxes in the hook formed by the boxes below $b$ in the $j$th column, the boxes to the right of $b$ in the $i$th row, and $b$ itself.

\begin{ex}
For the Young diagram \begin{picture}(40,30)\definecolor{mycolor}{gray}{.2}
                      \put(0,-20){\framebox(10,10)}
                      \put(0,-10){\framebox(10,10)}
                      \put(0,0){\framebox(10,10)}
                      \put(0,10){\framebox(10,10) { \colorbox{mycolor} {} }}   
                      \put(0,20){\framebox(10,10)}                   
                      
                      \put(10,0){\framebox(10,10)}
                      \put(10,10){\framebox(10,10)}
                      \put(10,20){\framebox(10,10)}
                      
                      \put(20,10){\framebox(10,10)}
                      \put(20,20){\framebox(10,10)}
                      
                      \put(30,20){\framebox(10,10)}
                                                         
\end{picture} and the box in the $(2,1)$ location, the hook is \begin{picture}(40,55)\definecolor{mycolor}{gray}{.2}
                      \put(0,-20){\framebox(10,10) { \colorbox{mycolor} {} }}
                      \put(0,-10){\framebox(10,10) { \colorbox{mycolor} {} }}
                      \put(0,0){\framebox(10,10) { \colorbox{mycolor} {} }}
                      \put(0,10){\framebox(10,10) { \colorbox{mycolor} {} }}   
                      \put(0,20){\framebox(10,10)}                   
                      
                      \put(10,0){\framebox(10,10)}
                      \put(10,10){\framebox(10,10) { \colorbox{mycolor} {} }}
                      \put(10,20){\framebox(10,10)}
                      
                      \put(20,10){\framebox(10,10) { \colorbox{mycolor} {} }}
                      \put(20,20){\framebox(10,10)}
                      
                      \put(30,20){\framebox(10,10)}
                                                         
\end{picture} and $h(2,1)=6$.
\end{ex}

\begin{lem}\label{lem4.2} 
Let $\lambda $ be a Young diagram. Let $f^{\lambda }$ denote the number of standard tableaux on $\lambda $.
Then 
$${\begin{pmatrix} n \\ k \end{pmatrix}}=\displaystyle\sum_{0\leq \ell\leq k} f^{(n-\ell,\ell)}.$$
\end{lem}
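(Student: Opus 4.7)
The plan is to reduce the claim to the classical two-row hook-length computation and then telescope. The key observation is that for any $0\le\ell\le n/2$ the number of standard tableaux of shape $(n-\ell,\ell)$ admits the closed form
\[
f^{(n-\ell,\ell)}=\binom{n}{\ell}-\binom{n}{\ell-1}
\]
(using the convention $\binom{n}{-1}=0$). Once this is in hand, summing over $0\le\ell\le k$ telescopes:
\[
\sum_{\ell=0}^{k}f^{(n-\ell,\ell)}=\sum_{\ell=0}^{k}\left[\binom{n}{\ell}-\binom{n}{\ell-1}\right]=\binom{n}{k},
\]
which is exactly the asserted identity.

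To establish the closed form I would apply the hook length formula to the shape $\lambda=(n-\ell,\ell)$. The hook lengths are easy to list: in the first row, the box in column $j$ with $1\le j\le\ell$ has hook $n-\ell-j+2$, and the box in column $j$ with $\ell+1\le j\le n-\ell$ has hook $n-\ell-j+1$; in the second row, the box in column $j$ with $1\le j\le\ell$ has hook $\ell-j+1$. Multiplying these together gives
\[
\prod_{b\in\lambda}h(b)=\frac{(n-\ell+1)!\,\ell!}{n-2\ell+1},
\]
so $f^{(n-\ell,\ell)}=n!/\prod h(b)=n!(n-2\ell+1)/\bigl((n-\ell+1)!\,\ell!\bigr)$. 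A direct simplification
\[
\binom{n}{\ell}-\binom{n}{\ell-1}=\frac{n!(n-\ell+1)-n!\,\ell}{\ell!(n-\ell+1)!}=\frac{n!(n-2\ell+1)}{\ell!(n-\ell+1)!}
\]
shows this matches the desired expression.

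Alternatively, and perhaps more naturally, one can give a bijective proof using ballot sequences. A standard tableau of shape $(n-\ell,\ell)$ is equivalent to a sequence $\epsilon_1\epsilon_2\cdots\epsilon_n\in\{\pm1\}^n$ with exactly $\ell$ entries equal to $-1$ and all partial sums nonnegative (place $i$ in row $1$ or $2$ according as $\epsilon_i=+1$ or $-1$). The reflection principle counts such sequences as $\binom{n}{\ell}-\binom{n}{\ell-1}$, giving the closed form without invoking the full hook length formula.

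Neither route contains any real obstacle; the only point requiring care is the convention at $\ell=0$, where $f^{(n)}=1=\binom{n}{0}-\binom{n}{-1}$, needed so that the telescoping sum starts correctly. Since the statement is a purely combinatorial identity independent of the rest of the Springer-variety machinery, the proof is essentially a one-line telescoping argument once the two-row standard-tableau count is recalled.
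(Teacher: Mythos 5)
Your proof is correct and is essentially the paper's argument: the paper also computes $f^{(n-k,k)}=\frac{n!(n-2k+1)}{(n-k+1)!\,k!}$ from the hook length formula and then runs an induction on $k$ whose step is exactly your identity $\binom{n}{k}=\binom{n}{k-1}+f^{(n-k,k)}$, so your telescoping sum is just the induction unrolled. The ballot-sequence alternative you sketch is a nice hook-length-free route, but your main argument and the paper's coincide.
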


\begin{proof} We prove the lemma by induction on $k$. As the case $k=0$ is clear, we assume that $k\geq 1$ and that the lemma holds for $k-1$. We use the following hook length formula:
$$f^{\lambda }=\frac{n!}{\displaystyle \Pi_{(i,j)\in \lambda }^{} h(i,j)}.$$
Using the induction assumption and the hook length formula, we have 
\begin{align*}
\displaystyle\sum_{0\leq \ell\leq k} f^{(n-\ell,\ell)} &=\displaystyle\sum_{0\leq \ell\leq k-1} f^{(n-\ell,\ell)}+f^{(n-k,k)} \\
&={\begin{pmatrix} n \\ k-1 \end{pmatrix}}+\frac{n!(n-2k+1)}{(n-k+1)!k!} \\
&={\begin{pmatrix} n \\ k \end{pmatrix}}.
\end{align*}
This completes the induction step and proves the lemma. 
\end{proof}

It follows from Proposition~\ref{prop4.1} and Lemma~\ref{lem4.2} that 
\begin{center}
$ $rank$_{\mathbb{Q}[t]}\mathbb{Q}[x_1,\dots,x_n,t]/I\leq \displaystyle\sum_{0\leq \ell \leq k}f^{(n-\ell,\ell)}={\begin{pmatrix} n \\ k \end{pmatrix}}.$
\end{center}
On the other hand, since the odd degree cohomology groups of $\mathcal{S}_N$ vanish, we have an isomorphism
$H^{\ast}_{S}(\mathcal{S}_N;\mathbb{Q})\cong \mathbb{Q}[t]\otimes H^{\ast}(\mathcal{S}_N;\mathbb{Q})$ as $\mathbb{Q}[t]$-modules, and 
the cellular decomposition of $\mathcal{S}_N$ given by Spaltenstein \cite{spa} (cf. also Hotta-Springer \cite{h-s}) implies that  
$$\dim H^{\ast}(\mathcal{S}_N;\mathbb{Q})
={\begin{pmatrix} n \\ \lambda _N \end{pmatrix}}:={\begin{pmatrix} n \\ \lambda _1!\lambda _2!\cdots \lambda _r! \end{pmatrix}}$$ 
where $\lambda _N=(\lambda _1,\lambda _2,\dots ,\lambda _r)$.
These show 
\begin{center}
$ $rank$_{\mathbb{Q}[t]}  H^{\ast}_{S}(\mathcal{S}_{(n-k,k)};\mathbb{Q}) =\dim_{\mathbb{Q}}  H^{\ast}(\mathcal{S}_{(n-k,k)};\mathbb{Q})
={\begin{pmatrix} n \\ k \end{pmatrix}}. $
\end{center}
Therefore, we have 
\begin{center}
$ $rank$_{\mathbb{Q}[t]}\mathbb{Q}[x_1,\dots,x_n,t]/I\leq $rank$_{\mathbb{Q}[t]}  H^{\ast}_{S}(\mathcal{S}_{(n-k,k)};\mathbb{Q}).$
\end{center}
This means that the epimorphism $\varphi$ in \eqref{eq:3.7} 
is actually an isomorphism, proving Theorem~\ref{theo:3.1}.


\begin{thebibliography}{9}
  \bibitem{d-h}
    B. Dewitt and M. Harada, \emph{Poset pinball, highest forms, and $(n-2,2)$ Springer varieties},  Electron. J. Combin. 19 (2012), no. 1, Paper 56, 35 pp.
  \bibitem{h-t}
    M. Harada and J. Tymoczko, \emph{Poset pinball, GKM-compatible subspaces, and Hessenberg varieties}, arXiv:1007.2750.
  \bibitem{h-s}
    R. Hotta and T.A. Springer, \emph{A specialization theorem for certain Weyl group representations and an application to the Green polynomials of unitary groups}, Invent. Math. 41 (1977), 113-127. 
  \bibitem{spa}
    N. Spaltenstein, \emph{The fixed point set of a unipotent transformation on the flag manifold}, Nederl. Akad. Wetensch. Proc. Ser. A 79 (1976), 452-456.
  \bibitem{spr1}
    T. A. Springer, \emph{Trigonometric sums, Green functions of finite groups and representations of Weyl groups}, Invent. Math. 36 (1976), 173-207. 
  \bibitem{spr2}
    T. A. Springer, \emph{A construction of representations of Weyl groups}, Invent. Math. 44 (1978) 279-293. 
  \bibitem{t}
    T. Tanisaki, \emph{Defining ideals of the closures of the conjugacy classes and representations of the Weyl groups}, T$\hat{\text{o}}$hoku Math. J. (2) 34 (1982), 575-585.
      
  \end{thebibliography}
\end{document}